\numberwithin{equation}{section}
\definecolor{roColor}{RGB}{183, 40, 46}
\definecolor{roColorBlue}{RGB}{40, 148, 200}
\definecolor{roColorGreen}{RGB}{0, 123, 67}
\newcommand{\edra}{\begin{color}{roColorBlue}}
\newcommand{\edrb}{\end{color}}
\title[H\"older damping]{H\"older damping for fractional wave equations}
\author{Jian Wang}
\address{Institut des Hautes {\'E}tudes Scientifiques, 91893 Bures-sur-Yvette, France}
\email[Jian Wang]{wangjian@ihes.fr}
\author{Ruoyu P. T. Wang}
\address{Department of Mathematics\\Yale University\\New Haven\\CT 06511\\United States
}
\email[Ruoyu~P.~T.~Wang]{ruoyu.wang@yale.edu}
\date{}
\renewcommand{\supp}{\operatorname{supp}}
\renewcommand{\Op}{\text{Op}}
\renewcommand{\Re}{\text{Re }}
\renewcommand{\Im}{\text{Im }}
\newcommand{\cim}{\operatorname{Im}}
\newcommand{\cre}{\operatorname{Re}}
\newcommand{\RR}{\mathbb{R}}
\newcommand{\md}{\mathrm{d}}
\begin{document}

\begin{abstract}
For fractional wave equations with low H\"older regularity damping, we establish quantitative energy decay rates for their solutions when the geometric control condition holds. The energy decay rates depend explicitly on the H\"older regularity of the damping. In particular, we show damping functions with lower H\"older regularities that below a certain threshold give slower energy decay.
\end{abstract}

\maketitle

\section{Introduction}

Let $(M,g)$ be an $n$-dimensional closed Riemannian manifold. We study the damped fractional wave equation with initial data $(u_0, u_1)\in H^{\alpha}(M)\times H^{\frac{\alpha}{2}}(M)$
\begin{equation}\label{equation:fractional_wave} 
(\partial_t^2 + \chi\partial_t + |D|^\alpha)u(t,x)=0, \ u(0,x)=u_0, \ \partial_t u(0,x)=u_1 
\end{equation}
where $\alpha\in (0,2)$, $|D|^\alpha\coloneq (-\Delta_g)^{\alpha/2}$ and $\Delta_g$ is the Laplace--Beltrami operator on $M$. The function $\chi$ is non-negative on $M$ and is called the damping function.
For a solution $u$ to \eqref{equation:fractional_wave}, we define its energy at time $t>0$ as
\begin{equation*}
    E(u,t)\coloneqq \int_M \left(||D|^{\alpha/2}u|^2+|\partial_t u|^2 \right) \md \mathrm{vol}_g.
\end{equation*}
Our main result provides explicit energy decay rates as time goes to infinity.

\noindent
{\bf Theorem.}
{\em Suppose $\sqrt{\chi}\in C^{0,\beta}(M; \RR_{\geq 0})$ with $\beta\in [0,1]$ and satisfies the geometric control condition as in Definition \ref{def:gcc}. Then there exists $C>0$ such that, uniformly for any $(u_0,u_1)\in H^\alpha(M)\times H^{\frac{\alpha}{2}}(M)$, for all $t>0$
\begin{equation*}
E(u,t)\le C\langle t\rangle^{-\gamma_\#}(\|u_0\|_{H^{\alpha}}^2+\|u_1\|_{H^{\frac{\alpha}{2}}}^2)
\end{equation*}
where the decay rate $\gamma_\#$ is given by 
\[ \gamma_\#\coloneqq \frac{2}{1-2(1+\frac{\nu_\#}{\alpha})}, \ \nu_\#\coloneqq \min\left(-1, 2\beta+\frac{\alpha}{2}-2\right). \]
}

Here $C^{0,\beta}(M)$ consists of functions that have $\beta$-H\"older regularity. More precisely, a function $f\in C^{0,\beta}(M)$ if the following norm is finite in any local coordinate charts:
\[ \|f\|_{C^{0,\beta}}\coloneqq \sup_{x} |f(x)| + \sup_{x\neq y}\frac{|f(x)-f(y)|}{|x-y|^{\beta}}. \]
Notice that if $\beta>0$ then functions in $C^{0,\beta}$ are continuous, while $C^{0,0}(M)$ is just the space of bounded functions on $M$, that is, $L^{\infty}(M)$.
For damping functions in $C^{0,\beta}(M)$, we define the {\em geometric control conditions} in the sense of \cite{rt74,blr92,leb93}. 
\begin{defi}\label{def:gcc}
    We say $\sqrt \chi\in C^{0,\beta}(M;\RR_{\geq 0})$, $\beta\in [0,1]$, satisfies geometric control condition if there exists an open set $\mathcal O, \tilde{\mathcal O} \subset M$ and $T>0$ such that $\overline{\mathcal O}\subset \tilde{\mathcal O}$, $\inf_{\tilde{\mathcal O}}\chi>0$ and for all $x\in M$, any geodesic on $M$ with unit speed starting at $x$ intersects $\mathcal O$ within time $T$.
\end{defi}

Damped fractional wave equation with $\alpha=1$ appears as the leading order equation for linearized gravity water waves. See works by Clamond et al \cite{clamondetal2005}, Moon \cite{moon2022}, and Alazard, Marzuola and Wang \cite{amw23} for the full damped gravity water wave model and its linearization.
A direct computation shows that 
\begin{equation*}
    \frac{\md}{\md t}E(u,t) = -\int_M \chi |\partial_t u|^2 \, \md \mathrm{vol}_g \leq 0,
\end{equation*}
that is, the energy of the solution decays in time. It is then curious to ask that quantitatively how fast the energy decays to zero for different types of damping functions. For $\alpha=1$, in \cite{kw23}, Kleinhenz and Wang showed that if $\chi\in L^\infty(M)$ and $\chi$ is bounded below by a positive constant on some open set, then the energy decays logarithmically, that is, $E(u,t)\leq C/\log(2+t)$ for some $C>0$ and all $t>0$; if $\chi\in L^{\infty}$ and $\supp \chi$ satisfies the geometric control condition, then the energy decays at the rate $\langle t\rangle^{-1}$, that is, $E(u,t)\leq C\langle t\rangle^{-1}$ for some $C>0$ and all $t>0$. We remark that the later case is exactly the case $\alpha=1$, $\beta=0$ in our Theorem. For more regular damping, in \cite{amw23}, the authors proved that for $\alpha=1$, $M=\mathbb R/2\pi \mathbb Z$ and $\chi\in C^{0,2\beta}$ with $\beta>\frac14$, the energy is bounded by $C\langle t\rangle^{-2}$ for some $C>0$ and all $t>0$. In particular, the $\langle t\rangle^{-2}$ decay rate is optimal for $C^{\infty}$ damping functions. If $\chi$ has high H\"older regularity and has only finite degeneracy in the sense of \cite[Definition 1.1]{amw23}, then the decay rate can by improved, see \cite[Theorem 1]{amw23} for the precise statement.

\captionsetup[subfigure]{labelformat=parens, labelsep=space}
\renewcommand{\thesubfigure}{\Alph{subfigure}}

\begin{figure}[t]
   \centering
   \begin{subfigure}{0.49\textwidth}
       \centering
       \includegraphics[width=\textwidth]{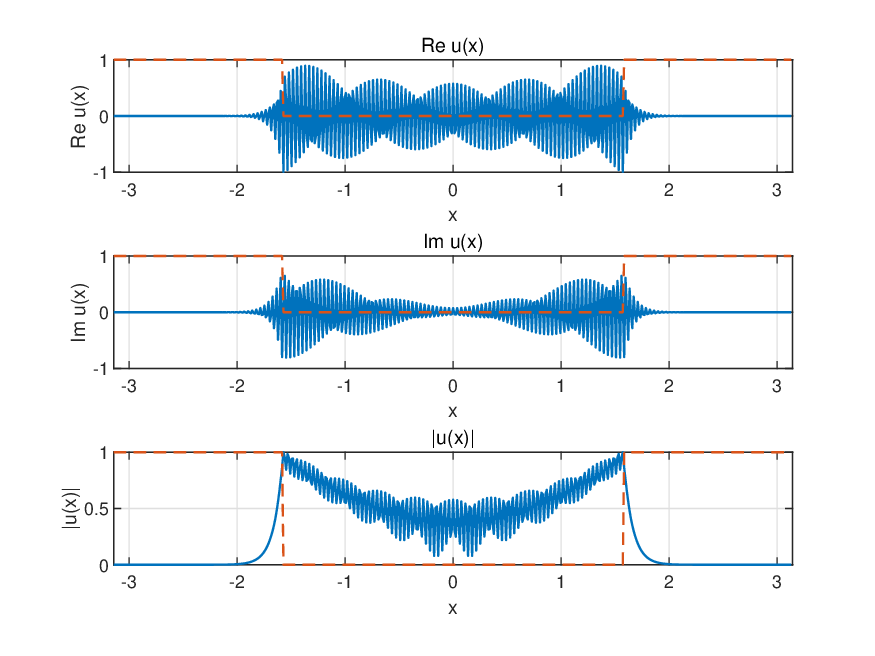}
       \caption{$\chi\in L^{\infty}(\mathbb T)$}
       \label{fig:beta0}
   \end{subfigure}
      \begin{subfigure}{0.49\textwidth}
       \centering
       \includegraphics[width=\textwidth]{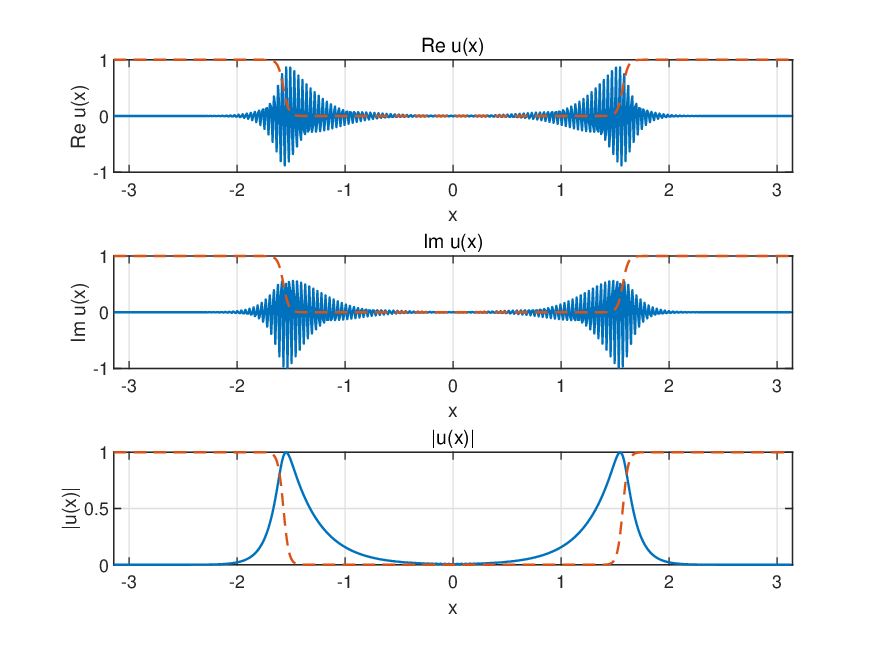}
       \caption{$\chi\in C^{\infty}(\mathbb T)$}
       \label{fig:tanh}
   \end{subfigure}
   \caption{Numerical illustration of eigenfunctions for $P(\lambda)$ in \S \ref{sec:proof} for damping with different regularities on the circle $\mathbb T=\RR/2\pi \mathbb Z$. Blue curves are the real parts, imaginary parts, and absolute values of the eigenfunctions. Red dashed curves are the damping functions. (A)~Damping $\chi = \mathbbm 1_{(-\frac{\pi}{2},\frac{\pi}{2})}$. Eigenvalue $\lambda\sim 13.03-0.03i$. (B) Damping $\chi(x) = 1+\frac12(\tanh(20(x-\frac{\pi}{2}))-\tanh(20(x+\frac{\pi}{2})))$. Eigenvalue $\lambda\sim 13.01-0.14i$. }
    \label{fig:mode}
\end{figure}

In the present paper, we explore energy decay rates for damping functions with low H\"older regularity, in particular, for $\alpha=1$ and $\sqrt{\chi}\in C^{0,\beta}$ with $\beta\in [0,\frac14]$. Unlike in the high H\"older regularity case (for example $\beta>\frac14$ with $\alpha=1$), surprisingly, our main result shows that the energy decay rates depend on the H\"older regularity $\beta$. Moreover, when $\beta$ decreases, the energy decay rate $\gamma_\#$ also decreases. Heuristically, this is because damping functions with lower H\"older regularity acts more like a ``hard wall'' near boundaries of their supports, making it harder for waves to propagate into the damping region. Such intuition is numerically illustrated in Figure~\ref{fig:mode}. In Figure~\ref{fig:beta0} we choose $\chi=\mathbbm 1_{(-\frac{\pi}{2},\frac{\pi}{2})}$ and the boundary of support of $\chi$ is close to a hard wall. In this case, waves are ``trapped'' in the non-damped region, resulting in slower energy decay. In Figure \ref{fig:tanh}, we choose $\chi$ to be a smooth function. The transition from non-damped region to damped region is much ``softer'' now. This makes it easier for waves to travel into the damped region and get damped. As a result, the energy decays faster in this case. For H\"older damping, this heuristic on wave propagation is quantitatively justified by Lemma \ref{lemma:propagate}.

An interesting question is whether energy decay rates in main Theorem are optimal --- in fact, we conjecture they are. According to Anantharaman and L\'eautaud \cite[Proposition 2.4]{aln14} (see also \cite[Proposition 5.1]{amw23}), on $\mathbb T\coloneqq \RR/2\pi \mathbb Z$, this amounts to show the following statement: 
for $\beta\in [0,1]$, let $\chi_\beta(x)=(\cos{x})^{2\beta}\mathbbm 1_{(-\frac{\pi}{2},\frac{\pi}{2})}(x)$ where $\mathbbm 1_{(-\frac{\pi}{2},\frac{\pi}{2})}$ is the indicator function of $(-\frac{\pi}{2},\frac{\pi}{2})$. Then there exists a sequence of real numbers $\lambda_n$ and a sequence of functions $u_n$ such that 
\[ \lambda_n\to \infty, \  \|u_n\|_{L^2}=1, \ \|(|D|^\alpha - i\lambda_n \chi_\beta-\lambda^2_n) u_n\|_{L^2} \leq C\lambda_n^{2(1+\frac{\nu_\#}{\alpha})} \]
where $\nu_\#$ is as in Theorem. 

\vspace{8pt}
\noindent
{\bf Structure of the paper.}
In \S \ref{sec:semi}, we briefly review the calculus of semiclassical pseudodifferential operators and prove a key ingredient of this paper, namely the commutator estimates for semiclassical pseudodifferential operator and H\"older functions, Lemma \ref{lem:commutator}. In \S \ref{sec:proof}, we prove the main Theorem by establishing a semiclassical resolvent estimate for the stationary operator.

\vspace{8pt}
\noindent
{\bf Acknowledgment.}
The authors are grateful to Jeff Galkowski for pointing out a way to sharpen the commutator estimate. JW is supported by Simons Foundation through a postdoctoral position at Institut des Hautes \'Etudes Scientifiques. RPTW was partially supported by EPSRC grant EP/V001760/1. 

\section{Commutators with H\"older functions}\label{sec:semi}

In \S \ref{ssect:semi}, we recall necessary facts about semiclassical analysis. For more details, see \cite{zwo12} or \cite[Appendix E]{dz19}. In \S \ref{ssec:comm}, we quantitatively estimate commutators between semiclassical pseudodifferential operators with compactly support symbols and H\"older functions.
\subsection{Semiclassical calculus}\label{ssect:semi}
Let $T^*M$ be the cotangent bundle of $M$. We say $a$ is in the symbol class $S$ if $a\in C^\infty(T^*M)$ and for all multi-indexes $\varpi,\varrho$,
\[ |\partial_x^\varpi\partial_\xi^\varrho a(x,\xi)|\leq C_{\varpi,\varrho} \text{ for all } (x,\xi)\in T^*M. \]
For a symbol $a\in S$, its semiclassical quantization defines an operator $\Op_h(a)$, also denoted $a(x,hD)$, as 
\[ a(x,hD)u(x)\coloneqq \frac{1}{(2\pi h)^n}\iint e^{\frac{i}{h}\langle x-y,\xi\rangle}a(x,\xi)u(y)\,\md y \md \xi. \]
We say $a(x,hD)$ is a semiclassical pseudodifferential operator. We say $a$ is the semiclassical symbol of $a(x,hD)$ and denote $\sigma_h(a(x,hD))\coloneqq a(x,\xi)$.
If $a, b\in S$, then $a(x,hD)\circ b(x,hD)$ is still a semiclassical pseudodifferential operator with symbol
\[ a\#b = ab+\frac{h}{2i}\{a,b\} + O_S(h^2) \text{ where } \{a,b\}\coloneqq \partial_\xi a\cdot \partial_x b-\partial_x a\cdot \partial_\xi b. \]
Consequently, the commutator $[a(x,hD), b(x,hD)]$ is a semiclassical pseudodifferential operator with symbol 
\[ \sigma_h([a(x,hD), b(x,hD)]) = a\#b-b\# a = -ih\{a,b\} + O_S(h^2). \]
Finally, we recall that pseudodifferential operators $a(x,hD)$ with $a\in S$ are bounded operator on $L^2(M)$ with norms bounded by semi-norms of $a$.

\subsection{Commutator with H\"older functions}\label{ssec:comm}
In the proof the propagation estimate Lemma \ref{lemma:propagate}, we need to estimate the commutator for semiclassical pseudodifferential operators and H\"older functions. In \cite[Proposition 2.1]{amw23}, such an estimate was obtained using paradifferential calculus. Here we sharpen \cite[Proposition 2.1]{amw23} (allowing $\alpha=\beta$ for $\beta\in [0,1]$ in the notations there), by adapting an argument of Galkowski and Wunsch \cite[Lemma 8.1]{gw24}, where the case $\beta=1$ was addressed.
\begin{lemm}\label{lem:commutator}
There exists $C>0$ such that for all $f\in C^{0,\beta}(M)$ with $\beta\in [0,1]$ and $a\in C^{\infty}_c(T^*M)$, for all $h>0$
\[ \|[f, a(x,hD)]\|_{L^2\to L^2}\leq Ch^{\beta}\|f\|_{C^{0,\beta}} . \]
\end{lemm}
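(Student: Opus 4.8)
The plan is to reduce to the case of a compactly supported symbol on $\mathbb{R}^n$ via a partition of unity and local coordinates, so that I may work with the Kohn--Nirenberg quantization and write the Schwartz kernel of $a(x,hD)$ explicitly. Since $a\in C^\infty_c(T^*M)$ one localizes the operator to a coordinate chart, and modulo $O_{L^2\to L^2}(h^\infty)$ errors (coming from the non-diagonal pieces of the quantization and from cutting off the symbol), the problem becomes: estimate $\|[f, a(x,hD)]\|_{L^2\to L^2}$ where now $f\in C^{0,\beta}(\mathbb{R}^n)$ (after multiplying by a cutoff, which only changes $f$ on a compact set and does not increase the $C^{0,\beta}$ norm by more than a fixed constant) and $a\in C^\infty_c(\mathbb{R}^{2n})$. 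The commutator has Schwartz kernel
\[ K(x,y) = \big(f(x)-f(y)\big)\, k_h(x,y), \qquad k_h(x,y) = \frac{1}{(2\pi h)^n}\int e^{\frac{i}{h}\langle x-y,\xi\rangle} a(x,\xi)\,\md\xi. \]
The key point is that because $f$ is $\beta$-H\"older, $|f(x)-f(y)|\leq \|f\|_{C^{0,\beta}}|x-y|^\beta$, so it suffices to control the operator with kernel $|x-y|^\beta |k_h(x,y)|$ on $L^2$.

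The next step is a Schur test on this kernel. Non-stationary phase (integrating by parts in $\xi$) gives, for every $N$, a bound $|k_h(x,y)|\leq C_N h^{-n}\langle |x-y|/h\rangle^{-N}$, hence $|x-y|^\beta |k_h(x,y)|\leq C_N h^{-n+\beta}\langle |x-y|/h\rangle^{-N+\beta}$. Taking $N$ large and integrating in $y$ (or in $x$, by the symmetric bound after also using derivatives of $a$ in $x$ to get the $x$-decay — here one needs the decay in $x-y$ which the above already provides symmetrically since it only uses $|x-y|$), one gets
\[ \sup_x \int |x-y|^\beta |k_h(x,y)|\,\md y \leq C h^{-n+\beta}\int_{\mathbb{R}^n} \langle |z|/h\rangle^{-N+\beta}\,\md z = C h^{\beta}\int_{\mathbb{R}^n}\langle |w|\rangle^{-N+\beta}\,\md w = C' h^{\beta}, \]
after the change of variables $z = x-y$, $w=z/h$, provided $N > n+\beta$. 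The same bound holds for $\sup_y\int\cdots\md x$ since the kernel bound is symmetric in $|x-y|$. Schur's lemma then yields $\|[f,a(x,hD)]\|_{L^2\to L^2}\leq C h^\beta \|f\|_{C^{0,\beta}}$, which combined with the earlier reductions gives the claim on $M$.

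I expect the main obstacle to be purely bookkeeping rather than conceptual: carefully handling the passage from $M$ to $\mathbb{R}^n$ so that the $C^{0,\beta}(M)$ norm (defined through local charts) controls the local $C^{0,\beta}(\mathbb{R}^n)$ norms uniformly, and checking that the $h^\infty$-remainders from localization, from the coordinate change in the quantization, and from the fact that $f$ is merely H\"older (so cannot be commuted past smoothing operators with gain — but it can be bounded, which is all one needs) are genuinely negligible compared with $h^\beta$. One subtlety worth flagging: when $\beta=0$ the estimate degenerates to the trivial bound $\|[f,a(x,hD)]\|_{L^2\to L^2}\leq C\|f\|_{L^\infty}$, which is immediate from $L^2$-boundedness of $a(x,hD)$ and $\|f\|_{L^\infty}$, so the Schur argument is only needed for $\beta\in(0,1]$ and is consistent at the endpoint. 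The argument of Galkowski--Wunsch handles $\beta=1$ by essentially this kernel estimate; the only new input here is that the H\"older exponent $\beta$ feeds directly into the power of $h$ through the elementary inequality $|f(x)-f(y)|\lesssim |x-y|^\beta$ and the matching power of $h$ extracted in the Schur integral.
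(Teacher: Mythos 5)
Your proposal is correct and follows essentially the same route as the paper: write the commutator kernel as $(f(x)-f(y))$ times the oscillatory integral $k_h(x,y)$, use non-stationary phase in $\xi$ to get the $\langle |x-y|/h\rangle^{-N}$ decay, convert the H\"older modulus $|x-y|^\beta$ into $h^\beta$, and close with a Schur test. The only cosmetic difference is that the paper makes the integration by parts explicit via the regularized operator $L_\xi=(h+\langle x-y,D_\xi\rangle)/(h+h^{-1}|x-y|^2)$, whereas you state the resulting kernel bound directly; the substance is identical.
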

\begin{proof}
Let $K(x,y)$ be the integral kernel of the commutator $[f, a(x,hD)]$ in local coordinates, then
\[\begin{split} 
K(x,y)= & \frac{1}{(2\pi h)^n} \int_{\RR^n} e^{\frac{i}{h} \langle x-y,\xi \rangle } (f(x)-f(y)) a(x,\xi) \,\md\xi.
\end{split}\]
For $h>0$ we define 
\[ L_\xi\coloneqq \frac{h+\langle x-y, D_{\xi} \rangle}{ h + \frac{1}{h}|x-y|^2 }. \]
One can check that
\[ L_\xi \left( e^{\frac{i}{h}\langle x-y,\xi \rangle} \right) = e^{\frac{i}{h}\langle x-y,\xi \rangle}, \ L_\xi^T=\frac{h-\langle x-y,D_\xi \rangle}{ h+\frac{1}{h}|x-y|^2 }. \]
Thus we can integrate by parts and write for any $N\in \mathbb N$
\[\begin{split} 
K(x,y) = & \frac{1}{(2\pi h)^n}\int L_\xi^N\left(e^{\tfrac{i}{h}\langle x-y,\xi \rangle}\right)(f(x)-f(y))a(x,\xi)\,\md \xi \\
= & \frac{1}{(2\pi h)^n}\int e^{\tfrac{i}{h}\langle x-y,\xi \rangle}(f(x)-f(y)) (L_\xi^T)^N a(x,\xi)\,\md \xi
\end{split}\]
Since $a\in C_c^{\infty}(T^*M)$,
\[ |(L_\xi^T)^N a(x,\xi)|\leq  \frac{C_N(h+|x-y|)^N}{(h+h^{-1}|x-y|^2)^N} \leq  C_N \langle h^{-1}|x-y| \rangle^{-N}. \]
Now using the H\"older continuity of $f$, we find
\[\begin{split} 
|K(x,y)|\leq & C h^{-n}|f(x)-f(y)|\langle h^{-1}|x-y| \rangle^{-N} \\
\leq & C \|f\|_{C^{0,\beta}} h^{-n+\beta} (h^{-1}|x-y|)^{\beta} \langle h^{-1}|x-y| \rangle^{-N} \\
\leq & C \|f\|_{C^{0,\beta}} h^{-n+\beta} \langle h^{-1}|x-y| \rangle^{-N+\beta}.
\end{split}\]
This pointwise bound implies that by choosing $N>0$ large, we have
\[ \sup_x \int |K(x,y)| \,\md y + \sup_y \int |K(x,y)| \, \md x \leq C h^{\beta}\|f\|_{C^{0,\beta}}\]
which, by Schur test, yields the $L^2$-bound for the commutator.
\end{proof}

\section{Resolvent estimates and Proof of Theorem}\label{sec:proof}
In this section we prove the resolvent bound for the stationary operator
\[ P(\lambda)\coloneqq |D|^{\alpha} - i\lambda \chi - \lambda^2. \]
We introduce the semiclassical scale $\lambda = h^{-\frac{\alpha}{2}}z$ for $z\in \RR$ near $1$ and $h>0$, then we can write
\[ P(\lambda) = h^{-\alpha}P(h,z), \ \text{where} \ P(h,z)\coloneqq |hD|^\alpha -izh^{\frac{\alpha}2}\chi-z^2. \]
Our goal is to show the following semiclassical resolvent estimates.
\begin{prop}[Resolvent estimate for $C^{0,\beta}$-damping]\label{prop:resolvent}
    Suppose $\sqrt{\chi}\in C^{0,\beta}$ with $\beta\in [0,1]$ and $\sqrt \chi$ satisfies the geometric control condition, then there exists $C>0$ and $h_0>0$ such that for all $z\in \RR$ near $1$ and all $0<h<h_0$
\begin{equation*}
    \|P(h,z)^{-1}\|_{L^2\to L^2}\leq Ch^{\nu_\#}
\end{equation*}
    where $\nu_\#= \min(-1, 2\beta+\frac{\alpha}{2}-2)$.
\end{prop}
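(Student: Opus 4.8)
The plan is to prove the resolvent estimate by contradiction, following the standard Lebeau--type argument adapted to the semiclassical setting and the low-regularity damping. Suppose the estimate fails: then there exist sequences $h_k\to 0$, $z_k\to 1$, and $u_k\in L^2(M)$ with $\|u_k\|_{L^2}=1$ such that $\|P(h_k,z_k)u_k\|_{L^2}=:\varepsilon_k h_k^{\nu_\#}$ with $\varepsilon_k\to 0$; write $f_k:=P(h_k,z_k)u_k$. First I would extract the two basic consequences of this equation. Testing against $u_k$ and taking imaginary parts gives the \emph{damping control}
\[ z_k h_k^{\alpha/2}\int_M \chi |u_k|^2\,\md\mathrm{vol}_g = -\cim\langle f_k,u_k\rangle \leq \|f_k\|_{L^2} = \varepsilon_k h_k^{\nu_\#}, \]
so that $\|\sqrt{\chi}\, u_k\|_{L^2}^2 = O(\varepsilon_k h_k^{\nu_\#-\alpha/2})$; taking real parts gives the \emph{elliptic control} $\|\,|h_kD|^{\alpha/2}u_k\|_{L^2}^2 = z_k^2 + O(h_k^{\alpha/2}) + O(\varepsilon_k h_k^{\nu_\#})$, which with $\nu_\#\leq -1$ shows $u_k$ is $O(1)$ in the semiclassical Sobolev space $H_h^{\alpha/2}$ and also that $u_k$ is (semiclassically) microlocally concentrated on the characteristic set $\{|\xi|_g^\alpha = z_k^2\}$, away from $\xi=0$. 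The defect measure $\mu$ of $(u_k)$ is therefore a probability measure on the cosphere bundle $\{|\xi|_g=1\}$ (after rescaling), and the goal is to show $\mu$ must vanish, contradicting $\|u_k\|_{L^2}=1$.

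The heart of the argument is the propagation estimate, which I would formulate as Lemma~\ref{lemma:propagate}: the equation $P(h,z)u=f$ propagates $L^2$ mass along the (rescaled) geodesic flow, with an error controlled by the commutator bound of Lemma~\ref{lem:commutator} and by the damping term. Concretely, for $b=b(x,\xi)\in C_c^\infty(T^*M)$ one computes $\frac{i}{h}\langle [P(h,z), \Opw_h(b)]u,u\rangle$ in two ways: on one side it equals $\frac1h(\langle \Opw_h(b)P u, u\rangle - \langle \Opw_h(b)u, P^* u\rangle)$, which is $O(h^{-1}\|f\|_{L^2}) + O(h^{\alpha/2-1}\|\sqrt\chi\, \Opw_h(b^{1/2})u\|\cdot\|\sqrt\chi\,u\|)$ after commuting $\sqrt\chi$ past $\Opw_h(b)$ — and here the commutator $[\sqrt\chi, \Opw_h(b^{1/2})]$ costs $O(h^\beta)$ by Lemma~\ref{lem:commutator}, so the damping contribution to the error is $O(h^{\alpha/2-1}\cdot h^{\beta}\cdot h^{(\nu_\#-\alpha/2)/2})$ plus the ``diagonal'' piece $O(h^{\alpha/2-1+\nu_\#/2-\alpha/4})$ from the genuine damping term; on the other side the principal symbol of $\frac ih[P,\Opw_h(b)]$ is the Poisson bracket $\{|\xi|_g^\alpha, b\}$, i.e. a constant multiple of the derivative of $b$ along the geodesic flow, so this term reads off the change of $\langle\Opw_h(b)u,u\rangle$ under the flow up to an $O(h)$ symbol-calculus error. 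Choosing $b$ adapted to a flow tube, integrating along the flow for time $T$, and using that every geodesic meets $\tilde{\mathcal O}$ within time $T$ where $\chi$ is bounded below, one concludes
\[ \|u_k\|_{L^2}^2 \leq C\Big( \|\sqrt\chi\, u_k\|_{L^2}^2 + h_k^{-1}\|f_k\|_{L^2}\|u_k\|_{L^2} + h_k^{\beta}\|u_k\|_{L^2}^2 + (\text{lower order})\Big) + o(1), \]
and balancing $h_k^{-1}\|f_k\|_{L^2} = \varepsilon_k h_k^{\nu_\#-1}$ against $1$ forces precisely the threshold $\nu_\# = \min(-1, 2\beta + \frac\alpha2 - 2)$: the term $h^{-1}\|f\|$ needs $\nu_\#\geq 1$ to be harmless but $\nu_\#\leq -1$ is free, while the damping-commutator error needs $\nu_\# - \alpha/2 + 2\beta + \alpha/2 - 2 \geq 0$ after accounting for the powers, giving the second branch $2\beta+\frac\alpha2-2$. (I would do this bookkeeping carefully to match the exponent $2(1+\nu_\#/\alpha)$ appearing elsewhere.)

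The cleanest way to package this is: (i) reduce to a defect-measure statement, (ii) show the measure is invariant under the rescaled geodesic flow and supported where $\chi=0$ — invariance from the propagation/commutator computation with the commutator error $o(1)$ precisely when $\beta>0$ (and handled separately by a direct Schur-type estimate when $\beta=0$, since there $\mu$ need not be flow-invariant but the $L^\infty$ damping with GCC support still kills it via the classical Lebeau argument), and (iii) invoke the geometric control condition: an invariant probability measure vanishing on $\{\chi>0\}\supset\tilde{\mathcal O}$ must vanish everywhere since every geodesic enters $\mathcal O\subset\tilde{\mathcal O}$, contradicting total mass $1$. The main obstacle I anticipate is \emph{step (ii) at the critical regularity}: when $\beta$ is small the commutator error $h^\beta$ and the damping error $h^{(\nu_\#-\alpha/2)/2}$ are only marginally summable, so the propagation estimate is not lossless and one cannot directly obtain exact flow-invariance of $\mu$; instead one must run the quantitative estimate at finite $h$ with all error powers tracked simultaneously and optimize, which is exactly where the value of $\nu_\#$ is pinned down. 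A secondary technical point is correctly normalizing the fractional Laplacian's Poisson bracket $\{|\xi|_g^\alpha, b\} = \alpha|\xi|_g^{\alpha-1}\,\partial_{|\xi|}(\cdots)$ — nonsmooth at $\xi=0$ — which is why one first localizes away from the zero section using the elliptic estimate, and why the flow that appears is the geodesic flow reparametrized, so GCC in its geodesic form applies verbatim.
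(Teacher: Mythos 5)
Your proposal wraps the argument in a contradiction/defect-measure framework, which is genuinely different from the paper's proof, and the defect-measure step is exactly where it does not close. The paper proves Proposition~\ref{prop:resolvent} directly, with no contradiction and no measures: it splits into low/high frequencies (Lemma~\ref{lemma:low_high}, elementary Plancherel bounds), the intermediate frequencies (Lemma~\ref{lemma:propagate}, a quantitative positive-commutator estimate whose key term $C(1+\epsilon^{-1}h^{\beta+\alpha/2-1})\|\sqrt\chi u\|$ records the H\"older loss via Lemma~\ref{lem:commutator}), adds the trivial damping bound $\|\sqrt\chi u\|^2\le Ch^{-\alpha/2}\|Pu\|\,\|u\|$, and optimizes a single absorption parameter $\delta(h)$ to obtain $\|u\|\le Ch^{\nu_\#}\|Pu\|$. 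Your outline in fact references the same Lemma~\ref{lemma:propagate}, so at bottom you have the same ingredients; the contradiction/defect-measure scaffolding around them is extra machinery that obscures rather than delivers the rate.

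The genuine gap is step~(ii). A semiclassical defect measure captures only the $h\to 0$ limit: from flow-invariance of $\mu$ and $\mu(\{\chi>0\})=0$ together with GCC one can at best recover the qualitative exponent $\nu_\#=-1$, never the sharper branch $2\beta+\frac\alpha2-2$ that dominates when $\beta<\frac12-\frac\alpha4$. You acknowledge this ("one must run the quantitative estimate at finite $h$"), but that finite-$h$ computation \emph{is} the proof, and once you write it down the measure $\mu$ plays no role. Beyond the missing idea, the bookkeeping as written would not run: a failure of $\|P(h,z)^{-1}\|\le Ch^{\nu_\#}$ produces $\|P(h_k,z_k)u_k\|=\varepsilon_k h_k^{-\nu_\#}$ (not $h_k^{\nu_\#}$; with $\nu_\#\le -1$, $h^{\nu_\#}\to\infty$, so your later claims like $\|\sqrt\chi u_k\|^2=O(\varepsilon_k h^{\nu_\#-\alpha/2})\to 0$ are sign-reversed), and your threshold condition "$\nu_\#-\frac\alpha2+2\beta+\frac\alpha2-2\ge 0$" simplifies to $\nu_\#\ge 2-2\beta$, incompatible with $\nu_\#\le-1$; the correct balance is $-\nu_\#+2\beta+\frac\alpha2-2\ge 0$, i.e.\ $\nu_\#\le 2\beta+\frac\alpha2-2$. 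I would recommend dropping the contradiction framing entirely and combining Lemmas~\ref{lemma:low_high} and~\ref{lemma:propagate} directly as the paper does.
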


\subsection{Elliptic regime}
Let $G_0$, $G_{\infty}\in C^{\infty}(T^*M;[0,1])$ such that 
\[ G_0|_{|\xi|\leq \frac18}=1, \ \supp G_0\subset \{|\xi|\leq 1/4\}, \ G_{\infty}|_{|\xi|\geq 8}=1, \ \supp G_{\infty}\subset \{|\xi|\geq 4\}. \] 
Then we have 
\begin{lemm}\label{lemma:low_high}
    For $u\in C^{\infty}$, we have estimates 
    \begin{equation*}\begin{split}
        \|G_0(hD)u\|_{L^2}\leq & \tfrac{1}{4^\alpha|z|^2}\|u\|_{L^2}+\tfrac{1}{|z|^2}\|P(h,z)u\|_{L^2}+ \tfrac{\sqrt{h}}{|z|}\|\sqrt{\chi}\|_{L^\infty}\|\sqrt{\chi}u\|_{L^2}, \\
        \|G_{\infty}(hD)u\|_{L^2}\leq & \tfrac{|z|^2}{4^\alpha}\|u\|_{L^2} + \|P(h,z)u\|_{L^2} + \tfrac{|z|\sqrt{h}}{4^\alpha} \|\sqrt{\chi}\|_{L^{\infty}} \|\sqrt{\chi} u\|_{L^2}.
    \end{split}\end{equation*}
\end{lemm}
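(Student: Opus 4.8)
The plan is to rearrange the defining identity $P(h,z)u=|hD|^\alpha u-izh^{\alpha/2}\chi u-z^2u$ so as to solve for the frequency-localized pieces $G_0(hD)u$ and $G_\infty(hD)u$, then to exploit the elementary symbol bounds $|\xi|^\alpha\leq 4^{-\alpha}$ on $\supp G_0\subset\{|\xi|\leq 1/4\}$ and $|\xi|^{-\alpha}\leq 4^{-\alpha}$ on $\supp G_\infty\subset\{|\xi|\geq 4\}$. It is convenient --- and harmless, since we are free to choose the cutoffs --- to take $G_0$ and $G_\infty$ to be functions of $|\xi|_g$ alone, so that $G_0(hD)$, $G_\infty(hD)$ and $|hD|^\alpha=(-h^2\Delta_g)^{\alpha/2}$ are all functions of the nonnegative self-adjoint operator $-h^2\Delta_g$; in particular they mutually commute, each has $L^2\to L^2$ norm equal to the supremum of its radial multiplier, and $\|G_0(hD)\|_{L^2\to L^2}\leq 1$, $\|G_\infty(hD)\|_{L^2\to L^2}\leq 1$. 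A side benefit of this reduction is that it never uses smoothness of $|\xi|^\alpha$ at $\xi=0$, which is the only point where the restriction $\alpha\in(0,2)$ would otherwise be slightly delicate.

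For the low-frequency bound, apply $G_0(hD)$ to $z^2u=|hD|^\alpha u-izh^{\alpha/2}\chi u-P(h,z)u$ to get
\[ z^2\,G_0(hD)u=\big(|hD|^\alpha G_0(hD)\big)u-izh^{\alpha/2}G_0(hD)\chi u-G_0(hD)P(h,z)u. \]
The first term has radial multiplier $|\xi|^\alpha G_0(\xi)\in[0,1]$ supported in $\{|\xi|\leq 1/4\}$, so its operator norm is at most $4^{-\alpha}$; for the middle term write $\chi u=\sqrt\chi\,(\sqrt\chi u)$ and use $\|G_0(hD)\|\leq 1$ to get $\|G_0(hD)\chi u\|_{L^2}\leq\|\sqrt\chi\|_{L^\infty}\|\sqrt\chi u\|_{L^2}$; and $\|G_0(hD)P(h,z)u\|_{L^2}\leq\|P(h,z)u\|_{L^2}$. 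Taking $L^2$ norms and dividing by $|z|^2$ gives the first inequality, with the power of $h$ being $h^{\alpha/2}$, i.e.\ the prefactor of $\chi$ in $P(h,z)$.

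For the high-frequency bound, rearrange instead as $|hD|^\alpha u=P(h,z)u+izh^{\alpha/2}\chi u+z^2u$ and introduce the operator $R(hD)$ with radial multiplier $G_\infty(\xi)/|\xi|^\alpha$; this multiplier is genuinely $C^\infty$ because $G_\infty$ vanishes near $\xi=0$, it is nonnegative, and it is bounded by $4^{-\alpha}$ since $|\xi|\geq 4$ on $\supp G_\infty$. By construction $R(hD)\,|hD|^\alpha=G_\infty(hD)$ and $\|R(hD)\|_{L^2\to L^2}\leq 4^{-\alpha}\leq 1$, so applying $R(hD)$ to the rearranged identity gives
\[ G_\infty(hD)u=R(hD)P(h,z)u+izh^{\alpha/2}R(hD)\chi u+z^2R(hD)u, \]
and estimating the three terms as before (once more factoring $\chi u=\sqrt\chi\,(\sqrt\chi u)$) yields the second inequality.

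There is no real obstacle here --- the argument is essentially bookkeeping --- and the only step deserving a line of justification is the passage to the functional calculus of $-h^2\Delta_g$, which is what produces the clean constants. If one prefers to allow $G_0,G_\infty$ to be arbitrary elements of $C^\infty_c(T^*M;[0,1])$ rather than radial, the same scheme still goes through with $G_0(hD),G_\infty(hD)$ treated as semiclassical pseudodifferential operators: the calculus recalled in \S\ref{ssect:semi} gives $\|G_0(hD)|hD|^\alpha\|_{L^2\to L^2}\leq 4^{-\alpha}+O(h)$ together with the remaining boundedness statements up to $O(h)$ errors, all of which are harmless.
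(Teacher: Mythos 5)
Your proof is correct and takes essentially the same approach as the paper: rearrange the defining identity for $P(h,z)$, localize with $G_0(hD)$ or $G_\infty(hD)$, and use the support conditions together with $\|\chi u\|_{L^2}\leq\|\sqrt\chi\|_{L^\infty}\|\sqrt\chi u\|_{L^2}$; your care in taking $G_0,G_\infty$ radial so that everything lives in the functional calculus of $-h^2\Delta_g$, and your explicit $R(hD)$ with multiplier $G_\infty/|\xi|^\alpha$ for the high-frequency step, just make precise what the paper's brief invocation of ``Plancherel theorem'' means on a closed manifold. One discrepancy you correctly surface: from $P(h,z)=|hD|^\alpha-izh^{\alpha/2}\chi-z^2$ the prefactor on the $\|\sqrt\chi u\|_{L^2}$ terms is $h^{\alpha/2}$, not the $\sqrt h$ written in the lemma statement and the paper's own proof; that $\sqrt h$ is evidently a leftover from the $\alpha=1$ case, and is harmless downstream since the proof of Proposition~\ref{prop:resolvent} only uses that this prefactor is $O(1)$.
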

\begin{proof}
    For the estimate of $G_0(hD)u$, we use the identity that 
    \[ z^2G_0(hD)u = G_0(hD)|hD|^{\alpha} u - G_0(hD)P(h,z)u - iz\sqrt{h}G_0(hD)\chi u. \]
    By Plancherel theorem and the support condition on $G_0$, 
    \[\begin{split} 
    & \|G_0(hD)|hD|^\alpha u\|_{L^2}\leq  \frac{1}{4^\alpha}\|u\|_{L^2}, \\
    & \|G_0(hD)P(h,z)u\|_{L^2}\leq  \|P(h,z)u\|_{L^2}, \\ 
    & \|G_0(hD)\chi u\|_{L^2}\leq  \|\chi u\|_{L^2}\leq \|\sqrt\chi\|_{L^\infty}\|\sqrt\chi u\|_{L^2} .
    \end{split}\]
    Now the first inequality follows from the triangle inequality.
    
    The second inequality is obtained similarly by using 
    \[ G_{\infty}(hD)|hD|^\alpha u = G_{\infty}(hD)P(h,z)u+iz\sqrt{h}G_{\infty}(hD)\chi u + z^2 G_{\infty}(hD)u \]
    and noticing the support condition on $G_{\infty}$.
\end{proof}

\subsection{Propagation regime}
Let us now study the propagation at intermediate frequencies, that is, $|\xi|\sim 1$. 
Let $G\in C^{\infty}_c(T^*M;[0,1])$ such that $\supp G\subset \{1/16\leq |\xi|\leq 16\}$ and $G=1$ on $\{1/8\leq |\xi|\leq 8\}$.
\begin{lemm}\label{lemma:propagate}
    Suppose $\sqrt{\chi}\in C^{0,\beta}(M)$, $\beta\in [0,1]$, and $\sqrt \chi$ satisfies the geometric control condition, then for all $N\in \RR$, there exist $h_0, C>0$ such that for all $\epsilon>0$, $z\in \RR$ near $1$, $0<h<h_0$, and $u\in C^{\infty}(M)$,
    \[ \|G(hD)u\|_{L^2}\leq Ch^{-1}\|P(h,z)u\|_{L^2}+C (\sqrt h + \epsilon)\|u\|_{L^2}+C(1+\epsilon^{-1}h^{\beta+\frac{\alpha}{2}-1})\|\sqrt{\chi}u\|_{L^2}. \]
\end{lemm}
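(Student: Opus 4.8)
The plan is a positive commutator argument along the Hamilton flow of $p=|\xi|_g^{\alpha}$, the principal symbol of $|hD|^{\alpha}$; all the frequency information we use lives in $\{1/16\le|\xi|\le16\}$, where $p$ is smooth and its flow is the geodesic flow, reparametrised on each cosphere. Write $P=P(h,z)=P_1-iP_2$ with $P_1=|hD|^{\alpha}-z^2$ self-adjoint and $P_2=zh^{\alpha/2}\chi\ge0$ self-adjoint (a multiplication operator), and $\|\cdot\|=\|\cdot\|_{L^2}$.

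First I would build an escape function from the geometric control condition, as in \cite{leb93,aln14}: there are $q\in C^{\infty}_c(T^*M;\RR)$ microlocalised to frequencies $\sim1$ on which $G\equiv1$, a cutoff $\chi_1\in C^{\infty}_c(M;[0,\infty))$ supported in $\tilde{\mathcal O}$---hence $\chi_1\le C\chi$, since $\inf_{\tilde{\mathcal O}}\chi>0$---and $c,C_0>0$ with
\[ H_pq\ \ge\ cG^2-C_0\chi_1\qquad\text{on }T^*M, \]
where, in addition, $q(x,\xi)\le0$ whenever $x$ lies in some neighbourhood $\mathcal N$ of $\supp\chi$; the latter costs nothing because on $\{\chi>0\}$ the right side is strongly negative, so one may subtract a nonnegative function supported near $\supp\chi$ without spoiling the inequality. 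Set $Q=\Op_h(q)$, taken self-adjoint. Since $q$ is supported at frequencies where $G\equiv1$, frequency localisation gives $\|Qu\|\le C\|G(hD)u\|+O(h^{\infty})\|u\|$.

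For the commutator step, $[P_2,Q]$ is anti-self-adjoint, so $\langle[P_2,Q]u,u\rangle$ is purely imaginary, whence $\Re\langle i[P,Q]u,u\rangle=\langle i[P_1,Q]u,u\rangle$ and the identity
\[ \langle i[P_1,Q]u,u\rangle\ =\ 2\Im\langle Pu,Qu\rangle+2\Re\langle P_2u,Qu\rangle. \]
For the left side, $\langle i[P_1,Q]u,u\rangle=h\langle\Op_h(H_pq)u,u\rangle+O(h^2)\|u\|^2$, and applying the sharp Gårding inequality to $H_pq-cG^2+C_0\chi_1\ge0$, together with $\langle\Op_h(G^2)u,u\rangle\ge\|G(hD)u\|^2-Ch\|u\|^2$ and $\langle\Op_h(\chi_1)u,u\rangle=\langle\chi_1u,u\rangle\le C\|\sqrt\chi u\|^2$ (valid as $\chi_1=\chi_1(x)$), yields
\[ \langle i[P_1,Q]u,u\rangle\ \ge\ ch\|G(hD)u\|^2-Ch\|\sqrt\chi u\|^2-Ch^2\|u\|^2. \]
For the right side, $2\Im\langle Pu,Qu\rangle\le 2\|Pu\|\|Qu\|\le C\|G(hD)u\|\|Pu\|+O(h^{\infty})(\|u\|^2+\|Pu\|^2)$ by the localisation of $Q$; and, writing $\chi=\sqrt\chi\cdot\sqrt\chi$ and $\sqrt\chi\, Q=Q\sqrt\chi+[\sqrt\chi,Q]$,
\[ 2\Re\langle P_2u,Qu\rangle\ =\ 2zh^{\alpha/2}\langle Q\sqrt\chi u,\sqrt\chi u\rangle+2zh^{\alpha/2}\Re\langle[\sqrt\chi,Q]u,\sqrt\chi u\rangle. \]
The first term is $\le Ch^{1+\alpha/2}\|\sqrt\chi u\|^2\le Ch\|\sqrt\chi u\|^2$: localising $\sqrt\chi u=\tilde\psi\sqrt\chi u$ with $\tilde\psi\in C^{\infty}_c(\mathcal N)$, $\tilde\psi\equiv1$ near $\supp\chi$, the operator $\tilde\psi Q\tilde\psi$ has nonpositive principal symbol, so sharp Gårding gives $\langle Q\sqrt\chi u,\sqrt\chi u\rangle\le Ch\|\sqrt\chi u\|^2$. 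The second term is $\le Ch^{\alpha/2+\beta}\|\sqrt\chi u\|\|u\|$ by Lemma \ref{lem:commutator}---the one place where the Hölder regularity of $\sqrt\chi$ enters, and what produces the gain of a factor $h^{\beta}$.

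Combining, absorbing $C\|G(hD)u\|\|Pu\|\le\tfrac{ch}{2}\|G(hD)u\|^2+\tfrac{C}{h}\|Pu\|^2$ into the left side, and dividing by $ch$,
\[ \|G(hD)u\|^2\ \le\ Ch^{-2}\|Pu\|^2+C\|\sqrt\chi u\|^2+Ch^{\alpha/2+\beta-1}\|\sqrt\chi u\|\|u\|+Ch\|u\|^2+O(h^{\infty})(\|u\|^2+\|Pu\|^2). \]
Applying Young's inequality with the free parameter $\epsilon$ to the cross term, $h^{\alpha/2+\beta-1}\|\sqrt\chi u\|\|u\|\le\epsilon^{-2}h^{\alpha+2\beta-2}\|\sqrt\chi u\|^2+\epsilon^2\|u\|^2$, and then taking square roots term by term (the $O(h^{\infty})$ contributions being absorbed into the $h^{-1}\|Pu\|$ and $\sqrt h\,\|u\|$ terms) gives exactly the claimed inequality. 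The main obstacle is the construction of the escape function: one must arrange simultaneously that $q$ is microlocalised where $G\equiv1$---so $\|Qu\|\lesssim\|G(hD)u\|$ and the $\|Pu\|$-term absorbs cleanly with no factor $\epsilon^{-1}$---and that $q\le0$ near $\supp\chi$---so that the leading, $O(h^{\alpha/2})$-size, non-self-adjoint contribution of the damping is in fact of size $O(h^{1+\alpha/2})$, a saving that is indispensable precisely when $\sqrt\chi$ has high Hölder regularity. With Lemma \ref{lem:commutator} in hand, the remaining estimates are routine bookkeeping.
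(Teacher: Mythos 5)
Your proposal follows the same blueprint as the paper: a positive-commutator argument with an escape function microlocalised to the intermediate frequency band, sharp Gårding to extract the $\|G(hD)u\|^2$ term, and Lemma~\ref{lem:commutator} to handle the commutator of the quantised escape function with the Hölder damping, followed by Young's inequality with the parameter~$\epsilon$. The one place where you deviate technically is the choice of test operator: the paper uses $F^*F$, which is manifestly nonnegative, so the leading term $\|F\sqrt\chi u\|^2$ in $\Re\langle\chi u,F^*Fu\rangle$ can simply be dropped with no analysis; you instead take a self-adjoint $Q=\Op_h(q)$ that is not positive, which forces you to impose the extra sign condition $q\le0$ near $\supp\chi$ and then invoke Gårding a second time to bound $\langle Q\sqrt\chi u,\sqrt\chi u\rangle$. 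That is a legitimate variant and your bookkeeping afterwards is correct.

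The gap is in the construction of $q$. You assert that $q\le0$ on a neighbourhood of $\supp\chi$ ``costs nothing'' because one may subtract a nonnegative function supported near $\supp\chi$, and you justify this with the claim that ``on $\{\chi>0\}$ the right side $cG^2-C_0\chi_1$ is strongly negative.'' Both halves of this are problematic. First, subtracting a function $r$ from $q$ replaces $H_pq$ by $H_pq-H_pr$; since $H_pr$ is supported in a flowout of $\supp r$ and need not have a sign, the inequality $H_pq\ge cG^2-C_0\chi_1$ is generally spoiled. Second, $\chi_1$ is supported in $\tilde{\mathcal O}$, and the geometric control condition only guarantees $\inf_{\tilde{\mathcal O}}\chi>0$, not $\supp\chi\subset\tilde{\mathcal O}$; so on $\{\chi>0\}\setminus\tilde{\mathcal O}$ the right-hand side equals $cG^2\ge0$, not something strongly negative. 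So the modification you describe does not produce the function you need. The required $q$ does exist --- for instance, taking the paper's escape function $f\ge0$ with $f>0$ on $\supp G$ and $H_pf\le-\gamma f$ near $T^*(M\setminus\mathcal O)$ (see \cite[Lemma E.48]{dz19}), the choice $q\coloneqq -f^2\psi^2$ is $\le0$ \emph{everywhere} and satisfies $H_pq=-2\psi^2 fH_pf\ge 2\gamma\psi^2f^2\ge cG^2$ off a compact subset of $T^*\mathcal O$, where the $\chi_1$ term absorbs the deficit --- but that argument is precisely the paper's construction, and the point of passing to $F^*F$ rather than $\Op_h(-f^2\psi^2)$ is exactly to avoid having to argue about the sign of a quantised symbol and instead use the exact nonnegativity of $F^*F$. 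Your proof can be completed by either substituting this concrete $q$ or by switching to $F^*F$; as written, the sign condition on $q$ is unjustified.
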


\Remark
The term $(1+\epsilon^{-1} h^{\beta-1+\frac{\alpha}2})\|\sqrt{\chi}u\|$ quantitatively measures how the H\"older regularity $\beta$ of $\chi$ obstructs wave propagation.

\begin{proof}
{\bf 1.} Since $\sqrt\chi$ satisfies the geometric control condition, we can take open sets $\mathcal O$ and $\tilde{\mathcal O}$ as in Definition \ref{def:gcc}. Then for any $(x_0,\xi_0)\in \supp G$, there exists $T>0$ such that 
\[ \exp(TH_p)(x_0,\xi_0)\in T^*\mathcal O, \ \text{ where } p=|\xi|, \ H_p = |\xi|^{-1}\xi\cdot \partial_x. \]
Consequently, there exists an escape function $f\in C_c^{\infty}(T^*M; \RR)$ such that
\begin{itemize}
    \item $f$ is compactly supported in $T^*M\setminus 0$;
    \item $f\geq 0$ on $T^*M$;
    \item $f>0$ on $\supp G$;
    \item There exists $\gamma\geq 0$ such that $H_pf\leq -\gamma f$ near $T^*(M\setminus \mathcal O)$.
\end{itemize}
For the construction of such an $f$, we refer to \cite[Lemma E.48]{dz19}.
Let $\psi=\psi(\xi)\in C^{\infty}(T^*M)$ such that $\psi$ is compactly supported in $T^*M\setminus 0$ and $\psi=1$ on $\supp f$.
Let $F\coloneqq f(x,hD)\psi(hD)$ where $f(x,hD)\coloneqq \Op_h(f)$ is the semiclassical quantization of $f$. Then $F$ is a semiclassical pseudodifferential operator with semiclassical symbol $\sigma_h(F) = f\psi$. Let us compute the commutator
\begin{equation}\label{eq:comm} 
\cim\langle P(h,z), F^*F u\rangle=\cim\langle (\cre P(h,z)) u, F^*F u\rangle+\cre\langle (\cim P(h,z)) u, F^*F u\rangle. 
\end{equation}
Here $\Re P(h,z)$ and $\Im P(h,z)$ are defined as 
\[\begin{split}
& \Re P(h,z) \coloneqq \frac{P(h,z)+P(h,z)^*}{2} = |hD|^\alpha -z^2, \\
& \Im P(h,z) \coloneqq \frac{P(h,z)-P(h,z)^*}{2} = -z h^{\frac{\alpha}{2}} \chi.
\end{split}\]
We estimate the two terms on the right-hand-side of \eqref{eq:comm} separately.

{\bf 2.} We first estimate the first term on the right-hand-side of \eqref{eq:comm}. We compute
\begin{equation*}\begin{split}
\cim\langle (\cre P(h,z)) u, F^*F u\rangle = \left\langle \tfrac{1}{2i}[F^*F, |hD|^\alpha]u, u \right\rangle.
\end{split}\end{equation*}
Notice that if $\tilde \psi = \tilde \psi(\xi)\in C^{\infty}(T^*M)$ such that $\tilde \psi$ is compactly supported in $T^*M\setminus 0$ and $\tilde \psi =1$ on $\supp \psi$, then one can check
\[\begin{split} 
[F^*F, |hD|^\alpha] = & F^*f(x,hD)\psi(hD)|hD|^\alpha - |hD|^\alpha \psi(hD)f(x,hD)^* F \\
= & F^*f(x,hD)\psi(hD) \left(\tilde \psi(hD)|hD|^\alpha\right) - \left(\tilde \psi(hD)|hD|^\alpha\right) \psi(hD)f(x,hD)^* F \\
= & [F^*F, \tilde \psi(hD)|hD|^\alpha]. 
\end{split}\]
Since $\tilde\psi$ is compactly supported in $T^*M\setminus 0$, we know $|\xi|^\alpha \tilde \psi\in C^{\infty}_c(T^*M)$. As a result, we find $\tilde \psi(hD)|hD|^{\alpha}$ is a semiclassical pseudodifferential operator and has semiclassical symbol $|\xi|^\alpha\tilde \psi(\xi)$. Thus we have 
\[ \sigma_h\left( \tfrac{1}{2i h}[F^*F, |hD|^\alpha] \right) = \alpha |\xi|^{\alpha-1}\psi(\xi)^2 f H_{p}f. \]
In particular, by the assumptions on $f$ and $\psi$, we find for $h$ small enough 
\[  \sigma_h\left( \tfrac{1}{2i h}[F^*F, |hD|^\alpha] \right) \leq -\alpha\gamma |\xi|^{\alpha-1}\psi^2 f^2\leq -cf^2 \text{ near } T^*(M\setminus \mathcal O) \]
for some $c>0$. 
Let $\chi_0\in C_c^\infty(\tilde{\mathcal O};[0,1])$ be such that $\chi_0=1$ on $\mathcal O$. Then there exists $C>0$ such that
\[ \sigma_h\left( -\tfrac{1}{2i h}[F^*F, |hD|^\alpha]-cF^*F + C\chi_0^2 \right)\geq 0 \text{ on } T^*M. \]
Apply microlocal G{\aa}rding's inequality (see \cite[Theorem 4.32]{zwo12} or \cite[Proposition E.34]{dz19}) and we claim that for $h>0$ sufficiently small and $u\in C^{\infty}(M)$
\[ \left\langle \left( -\tfrac{1}{2i h}[F^*F, |hD|^\alpha]-cF^*F + C\chi_0^2 \right)u, u \right\rangle \geq -Ch\|u\|_{L^2}^2. \]
Rearranging the terms, noticing $\|\chi_0 u\|_{L^2}\leq C\|\sqrt \chi u\|_{L^2}$, we obtain 
\begin{equation}\begin{split}\label{eq:real} 
h^{-1}\Im \langle (\Re P(h,z))u,F^*Fu \rangle 
\leq & -c\|Fu\|_{L^2}^2+ C \|\sqrt \chi u\|_{L^2}^2 + Ch\|u\|_{L^2}^2.
\end{split}\end{equation}

{\bf 3.} Now we consider the second term on the right-hand-side of \eqref{eq:comm}. Note that
\begin{equation*}
\cre\langle (\cim P(h,z)) u, F^*F u\rangle=-z h^{\frac{\alpha}{2}}\cre\langle \chi u, F^*F u\rangle. 
\end{equation*}
We compute
\begin{equation*}
\begin{split}
\cre{\langle \chi u, F^*F u\rangle}
= &\|F \sqrt{\chi}u\|^2+\cre{\langle\sqrt{\chi}u, [\sqrt\chi,F^*F]u\rangle} \\
\ge & \cre{\langle\sqrt{\chi} u, [\sqrt{\chi},F^*F]u\rangle}. 
\end{split}
\end{equation*}
By Lemma \ref{lem:commutator} we have
\begin{equation*}
\|[\sqrt{\chi},F^*F]u\|_{L^2} \le C h^{\beta}\|u\|_{L^2}. 
\end{equation*}
Thus we have
\begin{equation*}\begin{split}
\cre{\langle\sqrt{\chi}u, [\sqrt{\chi},F^*F]u\rangle}
\geq & -\|\sqrt \chi u\|_{L^2}\|[\sqrt \chi, F^*F]u\|_{L^2} \\
\geq &  -C h^{\beta} \|\sqrt \chi u\|_{L^2}\|u\|_{L^2} \\
\ge & -\epsilon h^{1-\frac{\alpha}{2}}\|u\|_{L^2}^2-C\epsilon^{-1} h^{2\beta-1+\frac{\alpha}{2}}\|\sqrt{\chi}u\|_{L^2}^2
\end{split}\end{equation*}
for any $\epsilon>0$. Therefore
\begin{equation}\label{eq:imag}
h^{-1}\cre\langle (\cim P(h,z)) u, F^*F u\rangle\le C\epsilon \|u\|_{L^2}^2+C\epsilon^{-1} h^{2\beta-2+\alpha}\|\sqrt{\chi}u\|_{L^2}^2. 
\end{equation}

{\bf 4.} Combining \eqref{eq:real} and \eqref{eq:imag} and we conclude
\[ h^{-1}\Im\langle P(h,z)u,F^*Fu \rangle \leq -c\|Fu\|_{L^2}^2 + C(1+\epsilon^{-1}h^{2\beta+\alpha-2})\|\sqrt\chi u\|_{L^2}^2 + C(h+\epsilon)\|u\|_{L^2}^2.  \]
Therefore
\[ \|Fu\|_{L^2}^2 \leq Ch^{-1}\|FP(h,z)u\|_{L^2}\|Fu\|_{L^2} + C(1+\epsilon^{-1}h^{2\beta+\alpha-2})\|\sqrt\chi u\|_{L^2}^2 + C(h+\epsilon)\|u\|_{L^2}^2.  \]
Using Cauchy--Schwartz and $F\in C^{\infty}_c(T^*M\setminus 0)$ and we obtain 
\[ \|Fu\|_{L^2} \leq Ch^{-1}\|P(h,z)u\|_{L^2} + C(1+\epsilon_1^{-1}h^{\beta+\frac{\alpha}{2}-1})\|\sqrt\chi u\|_{L^2} + C(\sqrt h+\epsilon_1)\|u\|_{L^2}  \]
where $\epsilon_1\coloneqq \sqrt \epsilon$ is an arbitrary positive number. The proof is completed upon redefining $\epsilon$ as $\epsilon_1$ and noticing that by the elliptic estimates \cite[Theorem E.33]{dz19}
\[ \|G(hD)u\|_{L^2}\leq \|Fu\|_{L^2}+O(h^{\infty})\|u\|_{L^2} \]
since $F>0$ on $\supp G$.
\end{proof}

\begin{proof}[Proof of Proposition \ref{prop:resolvent}]
    Indeed, summing the estimates in Lemma \ref{lemma:low_high} and \ref{lemma:propagate}, we obtain for $0<h\ll 1$,
    \begin{equation}\label{eq:dc}
        \|u\|_{L^2}\leq Ch^{-1}\|P(h,z)u\|_{L^2} + C(1 + h^{\beta-1+\frac{\alpha}2})\|\sqrt{\chi}u\|_{L^2}.
    \end{equation}
Next we notice 
    \begin{equation}\begin{split}\label{eq:ec}
        & \Im\langle P(h,z)u,u \rangle = z h^{\frac{\alpha}2}\|\sqrt{\chi}u\|_{L^2}^2 \\
        &  \ \ \Rightarrow \|\sqrt\chi u\|_{L^2}^2 \leq Ch^{-\frac{\alpha}2} \|P(h,z)u\|_{L^2}\|u\|_{L^2} \\
        &  \ \ \Rightarrow \|\sqrt{\chi}u\|_{L^2}\leq C\delta(h)^{-1}h^{-\frac{\alpha}2}\|P(h,z)u\|_{L^2} + \delta(h)\|u\|_{L^2}.
    \end{split}\end{equation}
Combining estimates \eqref{eq:dc} and \eqref{eq:ec}, we find 
    \begin{equation*}\begin{split}
        \|u\|_{L^2}
        \leq & (h^{-1}+ C\delta(h)^{-1}h^{-\frac{\alpha}2}(1+h^{\beta-1+\frac{\alpha}2}) )\|P(h,z)u\|_{L^2} + (1+h^{\beta-1+\frac{\alpha}2})\delta(h)\|u\|_{L^2}.
    \end{split}\end{equation*}
    Take $\delta(h)$ so that $(1+h^{\beta-1+\frac{\alpha}{2}})\delta(h)=\frac12$ and we obtain
    \begin{equation*}\begin{split}
        \|u\|_{L^2}
        \leq & C( h^{-1} + h^{-\frac{\alpha}2}(1+h^{\beta-1+\frac{\alpha}2})^2 )\|P(h,z)u\|_{L^2} \\
        \le & Ch^{\min( -1, -\frac{\alpha}{2}, 2\beta+\frac{\alpha}{2}-2 )}\|P(h,z)u\|_{L^2}.
    \end{split}\end{equation*}
  It remains to notice $\frac{\alpha}{2}<1$ since $\alpha\in (0,2)$ to conclude the proof of Proposition~\ref{prop:resolvent}.
\end{proof}

We are now ready to prove the main Theorem.
\begin{proof}[Proof of Theorem]
    Recall the definition of $P(h,z)$ and we can restate the resolvent estimates in $\lambda$: there exists $C>0$ such that for all $\lambda\in \RR\setminus [-C,C]$, 
    \[ \|P(\lambda)^{-1}\|_{L^2\to L^2}\leq C\lambda^{-2(1+\frac{\nu_\#}{\alpha})}. \]
    The polynomial decay then follows from standard semigroup tools as in \cite[Proposition 2.4]{aln14} (see also \cite{bt10}, \cite[Proposition 5.1]{amw23}, \cite[Lemma~2.25]{kw22}).
\end{proof}

\bibliographystyle{alpha}
\bibliography{Robib}

\begin{thebibliography}{CFGK05}

\bibitem[AL14]{aln14}
N.~Anantharaman and M.~L{\'e}autaud.
\newblock Sharp polynomial decay rates for the damped wave equation on the torus.
\newblock {\em Anal. PDE}, 7(1):159--214, 2014.

\bibitem[AMW25]{amw23}
Thomas Alazard, Jeremy~L. Marzuola, and Jian Wang.
\newblock Damping for fractional wave equations and applications to water waves.
\newblock {\em Journal de Mathématiques Pures et Appliquées}, 196:103692, 2025.

\bibitem[BLR92]{blr92}
C.~Bardos, G.~Lebeau, and J.~Rauch.
\newblock Sharp sufficient conditions for the observation, control and stabilization of waves from the boundary.
\newblock {\em SIAM J. Control Optim.}, 30(5):1024--1065, 1992.

\bibitem[BT10]{bt10}
A.~Borichev and Y.~Tomilov.
\newblock Optimal polynomial decay of functions and operator semigroups.
\newblock {\em Math. Ann.}, 347:455--478, 2010.

\bibitem[CFGK05]{clamondetal2005}
Didier Clamond, Dorian Fructus, John Grue, and Yvind Kristiansen.
\newblock An efficient model for three-dimensional surface wave simulations. part ii: Generation and absorption.
\newblock {\em Journal of Computational Physics}, 205(2):686--705, 2005.

\bibitem[DZ19]{dz19}
S.~Dyatlov and M.~Zworski.
\newblock {\em Mathematical Theory of Scattering Resonances}, volume 200 of {\em Graduate Studies in Mathematics}.
\newblock American Mathematical Society, 2019.

\bibitem[GW24]{gw24}
J.~Galkowski and J.~Wunsch.
\newblock Propagation for schrödinger operators with potentials singular along a hypersurface.
\newblock {\em Arch. Ration. Mech. Anal.}, 248(37), 2024.

\bibitem[KW22]{kw22}
P.~Kleinhenz and R.~P.~T. Wang.
\newblock Sharp polynomial decay for polynomially singular damping on the torus.
\newblock {\em preprint, arxiv:2210.15697}, 2022.

\bibitem[KW23]{kw23}
P.~Kleinhenz and R.~P.~T. Wang.
\newblock Optimal backward uniqueness and polynomial stability of second order equations with unbounded damping.
\newblock {\em Preprint, arXiv:2310.19911}, 2023.

\bibitem[Leb93]{leb93}
G.~Lebeau.
\newblock \'{E}quation des ondes amorties.
\newblock {\em Algebraic and Geometric Methods in Mathematical Physics}, 1993.

\bibitem[Moo24]{moon2022}
Gary Moon.
\newblock A toy model for damped water waves.
\newblock {\em Journal of Hyperbolic Differential Equations}, 21(02):273--298, 2024.

\bibitem[RT74]{rt74}
J.~Rauch and M.~Taylor.
\newblock Exponential decay of solutions to hyperbolic equations in bounded domains.
\newblock {\em Indiana Univ. Math. J.}, 24:79--86, 1974.

\bibitem[Zwo12]{zwo12}
M.~Zworski.
\newblock {\em Semiclassical analysis}, volume 138 of {\em Graduate Studies in Mathematics}.
\newblock American Mathematical Society, Providence, RI, 2012.

\end{thebibliography}

\end{document}